\documentclass[12pt]{article}

\usepackage{amsmath,amssymb,amsfonts}
\usepackage{amsthm}%
\usepackage{enumitem}
\usepackage{graphicx}
\usepackage{mathrsfs}
\usepackage{enumerate}

\allowdisplaybreaks%

{\theoremstyle{plain}%
 \newtheorem{theorem}{Theorem}
 
 \newtheorem{lemma}{Lemma}%
}
{\theoremstyle{remark}
\newtheorem{remark}{Remark}
}
{\theoremstyle{definition}

}

\begin{document}

\begin{center}
 {\large  On a conjecture concerning antipodal labelings for cycles}

 \ 

{\sc John M. Campbell}

\vspace{0.1in}

{\footnotesize Department of Mathematics and Statistics}

{\footnotesize Dalhousie University}

{\footnotesize Halifax, NS B3H 4R2}

{\footnotesize Canada}

\vspace{0.1in}

{\footnotesize {\tt jh241966@dal.ca}}

\vspace{0.1in}

\end{center}

\begin{abstract}
 Let $G$ denote a finite, connected, simple graph, and let $D$ denote its diameter, and let $d_{G}(u, v)$ denote the distance between 
 vertices $u$ and $v$ in $V(G)$. An \emph{antipodal labeling} of $G$ is a mapping $f\colon V(G) \to \mathbb{N}_{0}$ such that, for each 
 pair $(u, v)$ consisting of distinct vertices in $V(G)$, the relation $ |f(u) - f(v)| \geq D - d_{G}(u, v) $ holds. The \emph{span} of $f$ is then 
 defined so that $\operatorname{sp}(f) = \max\{ f(u) - f(v) : u, v \in V(G) \}$. The \emph{antipodal number} of $G$, denoted with 
 $\operatorname{an}(G)$, may then be defined as the minimum possible span among all antipodal labelings of $G$. Juan and Liu 
 [\emph{Ars Combin.}, 2012] proved the upper bound $\operatorname{an}(C_{4k}) \leq 2k^2 - 1$, and conjectured that 
 $\operatorname{an}(C_{4k}) = 2k^2 - 1$ for each positive integer $k$, and Juan and Liu verified this conjecture for $k \leq 5$. We 
 succeed in proving Juan and Liu's conjecture, which seems to have remained open. The openness of Juan and Liu's conjecture has been 
 noted, over the years, by many authors, including Rao et al.\ [\emph{Contrib. Discrete Math.}, 2015] and Saha et al.\ [\emph{Theory 
 Comput. Syst.}, 2022]. 
\end{abstract}

\vspace{0.1in}

\noindent {\footnotesize \emph{MSC:} 05C78}

\vspace{0.1in}

\noindent {\footnotesize \emph{Keywords:} antipodal labeling,  graph labeling, graph diameter,  graph distance}

 \section{Introduction}
 The purpose of this paper is to prove a conjecture due to Juan and Liu \cite{JuanLiu2012} concerning antipodal labelings, as defined 
 below, for cycles. It appears that Juan and Liu's conjecture has remained open, in view of the many previous references citing their work 
 \cite{BasuniaDasSahaTiwary2021,BloomfieldLiuRamirez2022,DasSahaTiwary2020,Gallian1998,
GomathiVenugopal2022,KumarPanigrahi2026,RaoKolaPanigrahi2015,Saha2020,Saha2021,SahaBasuniaDasTiwary2022,
SahaDasDasTiwary2020,SahaPanigrahi2015}. 
 Indeed, Juan and Liu's conjecture being open has been noted by many authors over the years, including Rao et al.\ in 2015 
 \cite{RaoKolaPanigrahi2015}, Das et al.\ in 2020 \cite{DasSahaTiwary2020}, Saha et al.\ in 2020 \cite{SahaDasDasTiwary2020}, 
 Saha in 2021 \cite{Saha2021}, 
 and Saha et al.\ in 2022 \cite{SahaBasuniaDasTiwary2022}. 
 Our proof is largely based on our extensive interactions with GPT-5.6 Pro. 

 Throughout this paper, graphs are assumed to be finite, connected, and simple (unless otherwise indicated). The preliminaries covered 
 below are required for our purposes. A given graph $G$ is endowed with its usual metric $d(u, v) = d_{G}(u, v)$, so that $d(u, v)$ is the 
 number of edges in the smallest path connecting $u$ and $v$. The \emph{diameter} $\operatorname{diam}(G)$ of a graph $G$ refers 
 to the maximum distance among all pairs of vertices in $G$. 
 An \emph{antipodal labeling} of $G$ is a mapping 
 $f\colon V(G) \to \{ 0, 1, \ldots \}$
 such that 
\begin{equation}\label{antipodalcondition}
 |f(u) - f(v)| \geq \operatorname{diam}(G) - d(u, v)
\end{equation}
 holds for distinct vertices $u$ and $v$ in $G$. For such a labeling $f$, the \emph{span} of $f$ is then defined so that 
\begin{equation}\label{displayspan} 
 \operatorname{sp}(f) = \max\{ f(u) - f(v) : u, v \in V(G) \}. 
\end{equation} 
 Being consistent with the notation and terminology in the work on Juan and Liu \cite{JuanLiu2012}, 
 the \emph{antipodal number} $\operatorname{an}(G)$
 for $G$ is then defined as the minimum span for an antipodal labeling of $G$. 

 As expressed by Juan and Liu \cite{JuanLiu2012}, the concept of an antipodal labeling for a graph was introduced by Chartrand et al.\ 
 \cite{ChartrandErwinZhang2000,ChartrandErwinZhang2002}, who established general bounds for antipodal numbers. 
 Bounds for $\operatorname{an}(C_n)$ were established by Chartrand et al.\ \cite{ChartrandErwinZhang2000}, and evaluations for 
 $\operatorname{an}(C_{n})$ have been proved for each of the cases among
 $n \equiv 1 \bmod {4}$, 
 $n \equiv 2 \bmod {4}$, and 
 $n \equiv 3 \bmod {4}$ \cite{ChartrandErwinZhang2000,JuanLiu2012}.

 Juan and Liu \cite[Theorem 9]{JuanLiu2012} proved that 
\begin{equation}\label{C4kbounds}
 2k^2 - \left\lfloor \frac{k}{2} \right\rfloor \leq \operatorname{an}(C_{4k}) \leq 2k^2 - 1 
\end{equation}
 for each integer $k$ exceeding $1$, and then conjectured \cite[Conjecture 1]{JuanLiu2012} that $\operatorname{an}(C_{4k}) $ is equal to 
 the upper bound in \eqref{C4kbounds} for every positive integer $k$. Juan and Liu verified this conjecture for $k \in \{ 1, 2, \ldots, 5 \}$. 
 The conjectured evaluation for $\operatorname{an}(C_{n})$ for the $n \equiv 0 \bmod {4}$ case has remained the last unsolved case, out 
 of the congruence classes modulo $4$. 
 This gives weight to our 
 proof of Juan and Liu's conjecture in Section \ref{secmain} below. 
 
\section{Proof of Juan and Liu's conjecture}\label{secmain}
 From the upper bound in \eqref{C4kbounds} proved by Juan and Liu \cite[Theorem 9]{JuanLiu2012}, it remains to prove that the 
 reverse inequality
\begin{equation}\label{displayreverse} 
 \operatorname{an}(C_{4k}) \geq 2k^2 - 1
\end{equation}
 holds for positive integers $k$, 
 and this provides the basis for our proof of Theorem \ref{maintheorem} below. 

\begin{remark}\label{correction}
 Kumar and Panigrahi \cite{KumarPanigrahi2026} do not claim to prove Juan and Liu's conjecture, 
 but they give a related minimality criterion in their Theorem~2.4 and apply it in Theorem~2.5, 
 but it can be shown, as follows, that these results are not valid as stated. Indeed, for \(C_{8}=z_{0}z_{1}\cdots z_{7}z_{0}\), the 
 ordering \(z_{0},z_{4},z_{1},z_{5},z_{2},z_{6},z_{3},z_{7}\), with respective labels \(0,0,3,3,6,6,9,9\), satisfies the hypotheses of Theorem~2.4(a) 
 in Kumar and Panigrahi's paper, 
 as well as those of Theorem~2.5 for \(n=4\), \(d=4\), and \(m=m'=1\), although the resulting span is \(9\). By contrast, the cyclic label sequence \(0,5,2,7,0,5,2,7\) is an antipodal labeling of \(C_{8}\) with span \(7\), and Juan and Liu \cite[Theorem~9]{JuanLiu2012} give \(\operatorname{an}(C_{8})=7\).
\end{remark}

 Let $V(C_{4k}) = \{ v_{0}, v_{1}, \ldots, v_{4k-1} \}$, 
 with the understanding that subscripts are given by residue classes modulo $4k$. 
          Observe that $\operatorname{diam}(C_{4k}) = 2k$. We require 
 the following specialization of 
 a result from Juan and Liu \cite[Proposition 1]{JuanLiu2012}, 
 referring the interested reader to Juan and Liu's proof of a generalized version of the following result. 

\begin{lemma}\label{lemmadistance}
 For any three vertices $u$, $v$, and $w$ in $C_{4k}$, the relation 
 $ d(u, v) + d(v, w) + d(w, u) \leq 4 k $
 holds (cf.\ \cite[Proposition 1]{JuanLiu2012}). 
\end{lemma}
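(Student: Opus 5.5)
Write $n=4k$ and identify $V(C_{4k})$ with $\mathbb{Z}_{4k}$ via $v_i \mapsto i$, so that $d(v_i,v_j)=\min\{|i-j| \bmod 4k,\ 4k-(|i-j| \bmod 4k)\}$. The plan is to use the fact that the distance on the cycle is the length of the shorter of the two arcs joining the vertices. I will bound each of the three pairwise distances by the length of a suitably chosen arc, and then observe that the three arcs together go around the cycle exactly once.

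First, I dispose of degenerate cases. If two of $u,v,w$ coincide, say $u=v$, then the sum equals $2d(v,w)\le 2\operatorname{diam}(C_{4k})=4k$. So assume the three vertices are distinct. After relabeling (a rotation, and a reflection if needed, both of which are automorphisms of $C_{4k}$), take $u=v_0$, $v=v_a$ and $w=v_b$ with $0<a<b<4k$. The three vertices then cut the cycle into three arcs: from $v_0$ to $v_a$ of length $a$, from $v_a$ to $v_b$ of length $b-a$, and from $v_b$ back to $v_0$ of length $4k-b$. These lengths sum to $4k$.

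The key step is the observation that the distance between two vertices is at most the length of any path joining them, and in particular at most the length of either arc between them. This gives $d(u,v)\le a$, $d(v,w)\le b-a$ and $d(w,u)\le 4k-b$. Adding the three inequalities yields $d(u,v)+d(v,w)+d(w,u)\le 4k$, which is the claim.

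There is no real obstacle in this argument. The only point that needs care is the reduction to the ordered position $0<a<b$: the sum is symmetric in $u,v,w$, and the graph is vertex-transitive, so no generality is lost. For completeness I would also note that the bound is sharp, for instance for $u=v_0$, $v=v_k$, $w=v_{2k}$.
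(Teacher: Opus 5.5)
Your proof is correct and complete. The paper itself gives no proof of this lemma: it presents it as a specialization of Juan and Liu's Proposition~1, which gives the bound $d(u,v)+d(v,w)+d(w,u)\le n$ for any three vertices of $C_n$, and refers the reader there.

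Your arc argument is the standard proof of that general statement. The three vertices cut the cycle into three arcs of total length $4k$, and each pairwise distance is at most the length of the corresponding arc. The argument works verbatim with $4k$ replaced by $n$, so it makes the paper self-contained at this point. It also covers the way the lemma is actually used later, namely $d(x_i,x_{i+2})\le 4k-d_i-d_{i+1}$ for three distinct vertices.

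A minor remark: you do not need to treat coincident vertices separately. The same argument goes through if you allow an arc of length $0$. For example, with $u=v=v_0$ and $w=v_b$, the bound is $0+b+(4k-b)$. Your separate route via $2d(v,w)\le 2\operatorname{diam}(C_{4k})=4k$ is equally valid.
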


 By taking a given antipodal labeling of a graph, and by then subtracting the minimum label from each label, this does not change the 
 antipodal condition in \eqref{antipodalcondition} or the value of the right-hand side of \eqref{displayspan}. So, for an arbitrary antipodal 
 labeling $f$ of $C_{4k}$, we may assume without loss of generality that $0$ is among the labels. We proceed to fix an ordering on 
 $V(C_{4k})$, by writing 
\begin{equation}\label{VC4k} 
 V(C_{4k}) = \{ x_0, x_1, \ldots, x_{4k-1} \}, 
\end{equation}
 with 
\begin{equation}\label{orderfx} 
 0 = f(x_0) \leq f(x_1) \leq \cdots \leq f(x_{4k-1}). 
\end{equation}
 Borrowing notation from Juan and Liu \cite{JuanLiu2012}, we define $d_{i} = d(x_{i}, x_{i+1})$ and 
\begin{equation}\label{definefi} 
 f_{i} = f(x_{i+1}) - f(x_{i})
\end{equation}
 for $i \in \{ 0, 1, \ldots, 4k-2 \}$. We also require the following specialization of a lemma from Juan and Liu \cite[Lemma 2]{JuanLiu2012}. 
 Again, we refer the interested reader to Juan and Liu's paper for a derivation of a more general version of 
 the following lemma. 
 
\begin{lemma}\label{boundfsum}
 The relation $f_{i} + f_{i+1} \geq k$ holds for all indices 
 $i \in \{ 0, 1, \ldots, 4k - 3 \}$ (cf.\ \cite[Lemma 2]{JuanLiu2012}). 
\end{lemma}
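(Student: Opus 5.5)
We prove Lemma \ref{boundfsum} by applying the antipodal condition \eqref{antipodalcondition} to each of the three pairs among $x_i$, $x_{i+1}$, $x_{i+2}$, and combining the results with Lemma \ref{lemmadistance}. Fix $i \in \{0, 1, \ldots, 4k-3\}$, so that $x_i$, $x_{i+1}$, $x_{i+2}$ are three distinct vertices of $C_{4k}$. Since $\operatorname{diam}(C_{4k}) = 2k$, and since the ordering \eqref{orderfx} makes all the relevant label differences nonnegative, condition \eqref{antipodalcondition} gives
\begin{align*}
 f_i &= f(x_{i+1}) - f(x_i) \geq 2k - d(x_i, x_{i+1}), \\
 f_{i+1} &= f(x_{i+2}) - f(x_{i+1}) \geq 2k - d(x_{i+1}, x_{i+2}), \\
 f_i + f_{i+1} &= f(x_{i+2}) - f(x_i) \geq 2k - d(x_i, x_{i+2}).
\end{align*}

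Adding all three inequalities yields
\[
 2(f_i + f_{i+1}) \geq 6k - \bigl( d(x_i, x_{i+1}) + d(x_{i+1}, x_{i+2}) + d(x_{i+2}, x_i) \bigr).
\]
By Lemma \ref{lemmadistance}, applied with $u = x_i$, $v = x_{i+1}$, $w = x_{i+2}$, the bracketed sum is at most $4k$. Hence $2(f_i + f_{i+1}) \geq 2k$, which gives $f_i + f_{i+1} \geq k$ as required.

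The only point needing care is the use of the third pair: we must bound the label gap between $x_i$ and $x_{i+2}$, which telescopes to $f_i + f_{i+1}$. Taking absolute values in \eqref{antipodalcondition} is harmless here because of the monotone ordering \eqref{orderfx}. The distinctness of the three vertices, required by \eqref{antipodalcondition}, holds because the $x_j$ enumerate $V(C_{4k})$ without repetition. No step presents a genuine obstacle; the essential input is the triangle-perimeter bound in Lemma \ref{lemmadistance}.
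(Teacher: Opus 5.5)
Your proof is correct and follows essentially the same approach as the paper. The paper gives no proof here and defers to Juan and Liu's Lemma~2, which uses the same argument: sum the three antipodal inequalities for $x_i, x_{i+1}, x_{i+2}$ and apply the perimeter bound of Lemma~\ref{lemmadistance}. The paper later reuses these ingredients in deriving \eqref{boundabs}, where \eqref{negepsilon} together with $d(x_i,x_{i+2}) \leq k + \varepsilon_i$ forces $\varepsilon_i \geq 0$, which is exactly your inequality.
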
 

\begin{theorem}\label{maintheorem}
 The relation $\operatorname{an}(C_{4k}) = 2k^2 - 1$ holds for every positive integer $k$. 
\end{theorem}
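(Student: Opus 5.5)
The plan is to prove the reverse inequality \eqref{displayreverse}; together with the upper bound in \eqref{C4kbounds} this gives Theorem~\ref{maintheorem}. The cases $k\le 5$ are already verified by Juan and Liu, so only the general argument is needed. With the normalization \eqref{orderfx}, the span is $\operatorname{sp}(f)=\sum_{i=0}^{4k-2} f_i$, a sum of $4k-1$ gaps. Pairing the gaps as $(f_0,f_1),(f_2,f_3),\ldots,(f_{4k-4},f_{4k-3})$ and applying Lemma~\ref{boundfsum} gives only $\operatorname{sp}(f)\ge (2k-1)k+f_{4k-2}$. So the real task is to find an additional $k-1$ units of \emph{excess} beyond this naive bound. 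I will write $e_i=f_i+f_{i+1}-k\ge 0$ and aim to show that the excesses, together with the leftover gap, sum to at least $k-1$.

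\textbf{Step 1: tight pairs.} From \eqref{antipodalcondition} we have $f_i\ge 2k-d_i$ and $f_i+f_{i+1}\ge 2k-d(x_i,x_{i+2})$. Lemma~\ref{lemmadistance} gives $d(x_i,x_{i+2})\le 4k-d_i-d_{i+1}$. Hence
\[
f_i+f_{i+1}\ \ge\ \max\{\,4k-d_i-d_{i+1},\ d_i+d_{i+1}-2k\,\}\ \ge\ k .
\]
This recovers Lemma~\ref{boundfsum} and also characterizes equality. The equality $e_i=0$ forces all of the following:
\begin{itemize}
\item $d_i+d_{i+1}=3k$;
\item $f_i=2k-d_i$ and $f_{i+1}=2k-d_{i+1}$;
\item $d(x_i,x_{i+2})=k$, with $x_{i+1}$ lying on the long arc between $x_i$ and $x_{i+2}$.
\end{itemize}
More generally, $e_i$ bounds the defect in each of these equalities. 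So a small $e_i$ means the triple is nearly rigid.

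\textbf{Step 2: runs of tight pairs.} I will represent vertices by positions $p_i\in\mathbb{Z}_{4k}$. A tight pair gives $p_{i+2}-p_i\equiv\pm k$. Moreover, the orientation of $x_{i+1}$ relative to $x_i$ and $x_{i+2}$ is determined by the sign, since $d_i,d_{i+1}\ge k$ and they go around the long way.

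Consider a run of consecutive indices $i$ with $e_i=0$. I expect that along such a run the even-indexed and odd-indexed vertices each stay inside a single coset of $k\mathbb{Z}_{4k}$, which has only $4$ elements. I also expect the orientation constraints to make the signs alternate or propagate in a forced way. Since the $x_i$ are distinct, a run of tight pairs therefore has bounded length, roughly at most $4$ to $6$.

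\textbf{Step 3: charging.} I would then run a charging argument: each maximal run of tight pairs must be broken by an index with $e_i\ge 1$. More precisely, I will show that every window of about four consecutive pairs in the chosen pairing contains total excess at least $1$. Summed over the $2k-1$ pairs plus the leftover gap $f_{4k-2}$, this yields excess at least $k-1$. The endpoints can be absorbed using the freedom to pair from the other end, as in $(f_1,f_2),\ldots$ together with $f_0$, and averaging the two pairings. If the window bound is slightly too weak, I would refine it by tracking $e_i$ quantitatively: a pair with excess $e$ allows the positions to drift by about $e$ off the coset, and drift accumulates, so distinctness still forces the total excess to grow linearly.

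\textbf{Main obstacle.} I expect Step~3 to be the hard part. Getting the exact constant $k-1$, rather than something like $k/2$, which is roughly Juan and Liu's bound, requires a sharp bound on the length of tight runs and careful handling of the interaction between overlapping pairs $(i,i+1)$ and $(i+1,i+2)$. My first attempt will be an exhaustive analysis of the $4$-element coset structure together with the forced orientations, checked against the extremal labeling $0,5,2,7,\dots$ for $k=2$ to see where excess is forced.
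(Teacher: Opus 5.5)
\textbf{There is a genuine gap: Step~3, which carries the whole argument, is not carried out, and as planned it cannot give the constant $k-1$.}

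Your Step~1 is correct, and so is the coset observation in Step~2. Along a run of indices with $e_i=0$, each parity class stays in a $4$-element coset of $k\mathbb{Z}_{4k}$, so such runs have length at most $6$. That length is actually attained. On $C_8$, the cyclic label sequence $0,3,6,1,4,7,2,5$ is an antipodal labeling of span $7$ whose sorted gaps all equal $1$. So every $e_i=0$, and all the forced excess sits in the end gaps $f_0+f_6$.

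Now the arithmetic of Step~3. You need $\sum_j e_{2j}+f_{4k-2}\ge k-1$ over $2k-1$ pairs, i.e.\ about half a unit per pair. One unit per window of four pairs gives only about $(2k-1)/4\approx k/2$. That is essentially the Juan--Liu bound $2k^2-\lfloor k/2\rfloor$ you are trying to beat, so ``this yields excess at least $k-1$'' does not follow. More generally, run lengths alone guarantee only one positive $e_i$ per seven consecutive indices, while roughly one per two is needed. The excess is forced globally, and your fallback (``drift accumulates'') is exactly the part that needs a concrete mechanism.

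\textbf{The missing mechanism, as the paper supplies it.} Let $\rho\colon C_{4k}\to C_{2k}$ identify antipodal vertices, let $\tau$ be the antipodal map of $C_{2k}$, and set $y_i=\tau^{\lfloor i/2\rfloor}(\rho(x_i))$.
\begin{itemize}
\item Since $+k\equiv -k \pmod{2k}$, the sign and orientation bookkeeping of your Step~2 disappears.
\item One gets exactly $\overline{d}(y_i,y_{i+2})=|k-d(x_i,x_{i+2})|\le\varepsilon_i$. The upper bound $d(x_i,x_{i+2})\le k+\varepsilon_i$ is your Step~1 computation.
\item At the ends, $\overline{d}(y_0,y_1)\le f_0$ and $\overline{d}(y_{4k-2},y_{4k-1})\le f_{4k-2}$.
\end{itemize}
Go forward through the even-indexed $y_i$ and back through the odd-indexed ones. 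This is a closed walk in $C_{2k}$ of length at most $f_0+f_{4k-2}+\sum_{i=0}^{4k-3}\varepsilon_i=2\operatorname{sp}(f)-(4k-2)k$, which is exactly the average of your two pairings.

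Each vertex of $C_{2k}$ equals $y_i$ for at most four indices $i$, so the walk visits at least $k$ vertices. Such a closed walk has length at least $2k-2$: either it uses every edge of $C_{2k}$, or its trace is a path on at least $k$ vertices whose edges it must each traverse twice. Hence $\operatorname{sp}(f)\ge 2k^2-1$.

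The closing-up is what supplies the crucial factor of $2$. Bounding the drift of the even and odd chains separately forces only about $k-2$ in total, which again gives just a $k/2$-type bound.
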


\begin{proof}
 To prove the reverse inequality shown in \eqref{displayreverse}, we begin with the base case for $ k = 1$. For an arbitrary antipodal 
 labeling of $C_{4}$, since $\operatorname{diam}(C_4) = 2$, we find that any two adjacent vertices $u$ and $v$ necessarily satisfy $| f(u) - 
 f(v) | \geq \operatorname{diam}(C_4) - d(u, v) = 1$, 
 so that $\operatorname{sp}(f) \geq 1$. 
 Similarly, by assigning the label sequence $0$, $1$, $0$, $1$
 in a cyclic order to the vertices of $C_{4}$, we find that adjacent vertices differ by $1$, whereas opposite vertices
 may have equal labels, giving us an antipodal labeling of span $1$. 
 We thus find that $\operatorname{an}(C_{4k}) = 2k^2 - 1$ holds for $k = 1$. 
 We proceed to disregard this base case and to let $k \geq 2$. 

 Our construction, relying on the ordering in \eqref{orderfx}, gives us that 
\begin{equation}\label{evaluatespf} 
 \operatorname{sp}(f) = f(x_{4k-1}) = \sum_{i=0}^{4k-2} f_{i}, 
\end{equation} 
 recalling the definition displayed in \eqref{definefi}. 
 Moreover, since $f$ is antipodal, we have that 
\begin{equation}\label{2kminusdi} 
 f_{i} \geq 2 k - d_i 
\end{equation}
 for all $i \in \{ 0, 1, \ldots, 4k - 2 \}$. 

 We proceed to define the nonnegative integer
\begin{equation}\label{defineepsilon} 
 \varepsilon_{i} = f_{i} + f_{i+1} - k 
\end{equation}
 for $i \in \{ 0, 1, \ldots, 4k-3 \}$, giving the excess over the lower bound given in Lemma \ref{boundfsum}. From the antipodal condition in 
 \eqref{antipodalcondition}, together with 
 the ordering imposed in \eqref{orderfx}, 
 we deduce that 
\begin{equation}\label{withoutabs}
 f(x_{i + 2}) - f(x_{i}) \geq 2 k - d(x_i, x_{i+2}). 
\end{equation}
 From the definition of $\varepsilon_i$ in \eqref{defineepsilon} together
 with \eqref{withoutabs}, we find that 
 $ k + \varepsilon_i \geq 2 k - d(x_i, x_{i+2}) $
 for all indices $i \in \{ 0, 1, \ldots, 4 k - 3 \}$, 
 i.e., so that 
\begin{equation}\label{negepsilon} 
 d(x_{i}, x_{i+2}) \geq k - \varepsilon_{i} 
\end{equation}
 for each $i$ in $ \{ 0, 1, \ldots, 4 k - 3 \}$. 
 In a similar fashion, a combined application of \eqref{2kminusdi}
 and Lemma \ref{lemmadistance} gives us that $ d(x_{i}, x_{i+2}) \leq 4 k - d_i - d_{i+1} \leq 4 k - (2 k - f_i) - 
 (2 k - f_{i+1}) = k +\varepsilon_i$. This, together with \eqref{negepsilon}, gives us that 
\begin{equation}\label{boundabs} 
 \left| k - d(x_i, x_{i+2}) \right| \leq \varepsilon_i. 
\end{equation}

 Now, let $\overline{C}$ denote a copy of $C_{2k}$, 
 with its vertex set written so that 
 $V(\overline{C}) = \{ \overline{v}_{0}, \overline{v}_{1}, \ldots, \overline{v}_{2k-1} \}$. 
 We also write $\overline{d}$ to denote the distance function for $\overline{C}$. 
 We then define a quotient mapping 
\begin{equation}\label{rhocolon} 
 \rho\colon V(C_{4k}) \to V(\overline{C})
\end{equation}
 so that 
\begin{equation}\label{rhovalue}
 \rho(v_{j}) = \overline{v}_{j \bmod {2k}}. 
\end{equation} 
 The function defined via \eqref{rhovalue} thus maps vertices in the same antipodal pair of $C_{4k}$
 to the same vertex in the codomain in \eqref{rhocolon}. 
 Now, define
\begin{equation}\label{taucolon}
 \tau\colon V(\overline{C}) \to V(\overline{C})
\end{equation}
 so that 
 $ 
 \tau(\overline{v}_{j}) = \overline{v}_{(j+k) \bmod {2k}} $ 
 for each $j$ in $ \{ 0, 1, \ldots, 2k-1 \}$. 

 The definition in \eqref{rhovalue} gives us that 
\begin{equation}\label{linedrhorho}
 \overline{d}(\rho(u), \rho(v)) = \min\{ d(u, v), 2 k - d(u, v) \} 
\end{equation}
 for all $u, v \in V(C_{4k})$. 
 Now, since $\tau(\rho(v))$ is the antipode of $\rho(v)$ in $C_{2k}$, we find that 
$ \overline{d}(\rho(u), \tau(\rho(v)))
 = k - \overline{d}(\rho(u), \rho(v)) 
 = k - \min\{ d(u, v), 2 k - d(u, v) \}$, i.e., so that 
\begin{equation}\label{abskduv}
 \overline{d}(\rho(u), \tau(\rho(v))) = \left| k - d(u, v) \right|. 
\end{equation}
 Also, as a consequence of \eqref{linedrhorho}, we have that 
\begin{equation}\label{2kminusduv} 
 \overline{d}(\rho(u), \rho(v)) \leq 2 k - d(u, v) 
\end{equation}
 for arbitrary $u, v \in V(C_{4k})$. 

 Now, define
\begin{equation}\label{defineyi}
 y_{i} = \tau^{\left\lfloor \frac{i}{2} \right\rfloor}\left( \rho(x_{i}) \right) 
\end{equation}
 for $i \in \{ 0, 1, \ldots, 4k-1 \}$. 
 Observe that since $\tau$ is an involution, 
 the power of $\tau$ in the right-hand side of \eqref{defineyi} is determined by the residue class of $i$ modulo $4$.
 Also observe that $\tau$ is an isometry, i.e. 
 so that 
 $ \overline{d}\left( \tau(\overline{u}), \tau(\overline{v}) \right) 
 = \overline{d}\left( \overline{u}, \overline{v} \right) $ 
 for all vertices $\overline{u}$ and $\overline{v}$ in $V(\overline{C})$. 
 
 Using the property such that $\tau$ is an isometry together with the relation in \eqref{abskduv}, we find that $\overline{d}(y_i, y_{i + 
 2}) = \overline{d}(\rho(x_i), \tau(\rho(x_{i+2})))  = |k - d(x_{i}, x_{i+2}) |$, so that an application of \eqref{boundabs} 
 then gives us that 
\begin{equation}\label{metricgaptwo}
 \overline{d}(y_{i}, y_{i+2}) \leq \varepsilon_{i}. 
\end{equation}
 From the definition in \eqref{defineyi}, we see that the vanishing of the superscript on the right of \eqref{defineyi} for $i \in \{ 0, 
 1 \}$ gives us that 
\begin{equation}\label{fromyi} 
 \overline{d}(y_0, y_1) = \overline{d}\big( \rho(x_0), \rho(x_1) \big). 
\end{equation}
 A combined application of \eqref{2kminusdi}, \eqref{2kminusduv}, and 
 \eqref{fromyi} allows us to deduce that 
\begin{equation}\label{firstendpoint}
 \overline{d}(y_0, y_1) \leq 2 k - d_0 \leq f_0. 
\end{equation}
Now, since
 $\left\lfloor \frac{4k-2}{2} \right\rfloor = \left\lfloor \frac{4k-1}{2} \right\rfloor = 2 k - 1$, 
 we find that 
\begin{equation}\label{y4kpair}
 y_{4k-2} = \tau^{2k-1}\left( \rho(x_{4k-2}) \right) \quad \text{and} \quad
 y_{4k-1} = \tau^{2k-1}\left( \rho(x_{4k-1}) \right). 
\end{equation}
 Since $\tau$ is an involution, we obtain from \eqref{y4kpair} that 
\begin{equation}\label{y4kagain}
 y_{4k-2} = \tau\left( \rho(x_{4k-2}) \right) \quad \text{and} \quad
 y_{4k-1} = \tau\left( \rho(x_{4k-1}) \right). 
\end{equation}
 Since $\tau$ is an isometry, this and \eqref{y4kagain} together give us that 
\begin{align}
\begin{split}
 \overline{d}(y_{4k-2}, y_{4k-1})
 & = \overline{d}(\tau(\rho(x_{4k-2})), \tau(\rho(x_{4k-1}))) \\
 & = \overline{d}(\rho(x_{4k-2}), \rho(x_{4k-1})). 
\end{split}\label{isometryy4k}
\end{align}
 A combined application of \eqref{2kminusduv} and \eqref{isometryy4k} then gives us that 
\begin{equation}\label{aftertauy4k}
 \overline{d}(\rho(x_{4k-2}), \rho(x_{4k-1})) \leq 2 k - d_{4k-2}. 
\end{equation}
 Now, a combined application of \eqref{2kminusdi}, \eqref{isometryy4k}, and \eqref{aftertauy4k} gives us that 
\begin{equation}\label{secondendpoint} 
 \overline{d}(y_{4k-2}, y_{4k-1}) \leq f_{4k-2}. 
\end{equation}

 Consider the tuple 
\begin{equation}\label{displayW}
 W = \big( y_0, y_2, \ldots, y_{4k-2}, y_{4k-1}, y_{4k-3}, \ldots, y_{1}, y_{0} \big) 
\end{equation}
 consisting of vertices in the underlying set of the metric space $(V(\overline{C}), \overline{d})$, noting the first and final entries of the 
 tuple in \eqref{displayW} are the same. 
 We define the \emph{metric length} $\operatorname{met}(W)$ of $W$ (as opposed to the length of $W$ as a tuple) 
 as the sum of the distances between the consecutive entries of $W$. 
 From \eqref{metricgaptwo}, we find that 
\begin{equation}\label{firstepsilonsum} 
 \sum_{\substack{ 0\leq i \leq 4k-4 \\ \text{$i$ even} }} \overline{d}(y_i, y_{i+2}) 
 \leq \sum_{\substack{0\leq i \leq 4k-4 \\ \text{$i$ even}}} \varepsilon_i 
\end{equation}
 and that 
\begin{equation}\label{secondepsilonsum} 
 \sum_{\substack{ 1 \leq i \leq 4 k - 3 \\ \text{$i$ odd} }} \overline{d}(y_{i+2}, y_{i}) 
 \leq \sum_{\substack{ 1 \leq i \leq 4 k - 3 \\ \text{$i$ odd}}} \varepsilon_{i}. 
\end{equation}
 The definition in \eqref{displayW} and the definition of $\operatorname{met}(W)$ give us that 
\begin{equation}\label{firstmetbound} 
 \operatorname{met}(W) = 
 \overline{d}(y_{4k-2}, y_{4k-1}) 
 + \overline{d}(y_{1}, y_{0}) 
 + \sum_{\substack{ 0\leq i \leq 4k-4 \\ \text{$i$ even} }} \overline{d}(y_i, y_{i+2}) + \sum_{\substack{ 1 \leq i \leq 4 k - 3 \\ \text{$i$ odd} }} \overline{d}(y_{i+2}, y_{i}). 
\end{equation}
 A combined application of \eqref{firstendpoint}, 
 \eqref{secondendpoint}, \eqref{firstepsilonsum}, 
 \eqref{secondepsilonsum}, and \eqref{firstmetbound} gives us that 
\begin{equation}\label{metWabove}
 \operatorname{met}(W) \leq f_0 + f_{4k-2} + \sum_{i=0}^{4k-3} \varepsilon_i.
\end{equation}
 We proceed to determine a lower bound for $\operatorname{met}(W)$. 

 Recall that $\overline{C}$ is an isomorphic copy of $C_{2k}$. Also, recall that $y_{i}$, for a given index $i \in \{ 0, 1, \ldots, 4 k - 1 \}$ and 
 according to the definition in \eqref{defineyi}, is obtained by taking a vertex $x_{i}$ in $V(C_{4k})$ (recalling \eqref{VC4k}), mapping 
 $x_i$ to an element in the codomain $V(\overline{C})$ in \eqref{rhocolon}, and then mapping the resultant vertex to an element in 
 the codomain $V(\overline{C})$ in \eqref{taucolon}, under possibly repeated applications of the involution $\tau$. Also observe that 
\begin{equation}\label{ztauz}
 \tau(z) \neq z
\end{equation}
 for each element $z$	 in the domain $V(\overline{C})$ of $\tau$, i.e., since $\tau$ maps $z$ to the antipodal vertex of $z$. We claim 
 that the sequence $y_{0}$, $y_{1}$, $\ldots$, $y_{4k-1}$ contains at least $k$ distinct vertices of $\overline{C}$, and this is 
 demonstrated below. 

 Fix a vertex $z$ in $V(\overline{C})$. If $y_{i} = z$ for some index $i \in \{ 0, 1, \ldots, 4 k - 1 \}$, then, according to the definition 
 in \eqref{defineyi} (and recalling \eqref{ztauz} and that $\tau$ is an involution), we have that $\rho(x_{i}) \in \{ z, \tau(z) \}$. Each vertex 
 of $\overline{C}$ has precisely two preimages with respect to $\rho$. So, the set $\rho^{-1}(\{ z, \tau(z) \})$ contains precisely four 
 vertices of $C_{4k}$ (recalling the domain in \eqref{rhocolon}). Since the expressions among $x_0$, $x_1$, $\ldots$, and $ x_{4k-1}$ 
 are pairwise unequal (recalling \eqref{VC4k}), any choice of $z \in V(\overline{C})$ can occur at most four times among expressions of 
 the form $y_{i}$ for $i \in \{ 0, 1, \ldots, 4 k - 1 \}$. This shows that 
\begin{equation}\label{atleastky}
 | \{ y_{i} : i \in \{ 0, 1, \ldots, 4 k - 1 \} \} |
 \geq \frac{4k}{4} = k. 
\end{equation}
 From \eqref{displayW} and \eqref{atleastky} together, we find that $W$ contains at 
 least $k$ distinct vertices. 

 We claim that $W$ is of metric length at least $2k-2$. To show this, we begin by forming a closed walk along the edges of $C_{2k}$, by 
 replacing each pair of consecutive entries in $W$ with a shortest path between these vertices, and by forming a walk from the 
 consecutive paths formed. Observe that the length of this resulting walk is equal to $\operatorname{met}(W)$. Now, let $H$ denote 
 the connected subgraph consisting of all vertices and edges traversed by the walk. If $H = C_{2k}$ (letting $\overline{C}$ be identified 
 with $C_{2k}$ for convenience), then each edge of $C_{2k}$ is used at least once, so that the length of the walk is at least $2k$, and 
 hence at least $2k-2$. If $H \neq C_{2k}$, then, since $H$ is (by construction) a connected subgraph of $C_{2k}$, we find that $H$ is a 
 path. Since $W$ contains at least $k$ distinct vertices (recalling \eqref{atleastky}), we then find that $H$ has at least $k - 1$ edges. 
 Each edge of $H$ is a bridge. Since the walk we have constructed is closed and uses every edge of $H$, each edge of $H$ is necessarily
 traversed at least twice. Moreover, since $H$ contains at least $k$ vertices, it has at least $k-1$ edges. Consequently, the length of the 
 walk is at least $2 | E(H) | \geq 2(k-1)$, and thus we thus have the desired relation such that
\begin{equation}\label{metWlower}
 \operatorname{met}(W) \geq 2 k - 2. 
\end{equation} 
 A combined application of \eqref{metWabove} and \eqref{metWlower} then gives us that 
\begin{equation}\label{combinemetW} 
 f_{0} + f_{4k-2} + \sum_{i=0}^{4k-3} \varepsilon_{i} \geq 2 k - 2. 
\end{equation}
 From \eqref{combinemetW} and the definition of $\varepsilon_{i}$ in \eqref{defineepsilon}, we see, 
 with the use of a reindexing argument, that 
\begin{align}
\begin{split}
 \sum_{i=0}^{4k-3} \varepsilon_{i} 
 & = \sum_{i=0}^{4k-3} (f_{i} + f_{i+1} - k) \\ 
 & = 2 \sum_{i=0}^{4k-2} f_{i} - f_{0} - f_{4k-2} - (4k-2) k.
\end{split}\label{reindexing}
\end{align}
 Using \eqref{evaluatespf} and \eqref{reindexing} together, we then obtain that 
\begin{equation}\label{reintroducesp} 
 f_{0} + f_{4k-2} + \sum_{i=0}^{4k-3} \varepsilon_{i} = 2 \operatorname{sp}(f) - (4k-2) k. 
\end{equation}
 The relations in \eqref{combinemetW} and \eqref{reintroducesp} together give us that $ 2 \operatorname{sp}(f) - (4 k - 2) k \geq 2 k - 
 2$, which, in turn, implies that 
\begin{equation}\label{spflower} 
 \operatorname{sp}(f) \geq 2 k^2 - 1. 
\end{equation}
 Since $f$ was chosen as an arbitrary antipodal labeling of $C_{4k}$, we can conclude from \eqref{spflower} that 
 $\operatorname{an}(C_{4k}) \geq 2k^2 - 1$. This and the Juan--Liu upper bound in \eqref{C4kbounds} 
 \cite[Theorem 9]{JuanLiu2012}
 then give us the desired result. 
\end{proof}

\section{Conclusion}
 As a natural way of extending the techniques and main result in this paper, one might consider the problem of evaluating 
 $\operatorname{an}(G)$ for the case whereby $G$ is in a natural family of graphs extending cycle graphs.  We encourage the pursuit of 
 research endeavors based on this. 

\subsection*{Acknowledgements}
 The author acknowledges extensive interactions with GPT-5.6 Pro during the exploratory and proof-development stages of this work. All 
 AI-generated suggestions were substantially revised, corrected, and independently verified by the author, who assumes full responsibility 
 for the mathematical content.

\bibliographystyle{plain}
\bibliography{seprefe}

\end{document}